\newcommand{\mathsym}[1]{{}}
\newcommand{\unicode}[1]{{}}
\theoremstyle{definition}
\newtheorem{theorem}{Theorem}
\newtheorem{proposition}{Proposition}
\newtheorem{corollary}{Corollary}
\newtheorem{lemma}{Lemma}
\newenvironment{assumption}
  {\pushQED{\qed}\assumptionex}
  {\popQED\endassumptionex}
  \newtheoremstyle{mythm}%
{3pt}
{3pt}
{}
{}
{\bfseries}
{}
{.5em}
{}%
\theoremstyle{mythm}
\newenvironment{remark}
  {\pushQED{\qed}\remarkex}
  {\popQED\endremarkex}
\newcommand{\lrp}[1]{\left(#1\right)}
\newcommand{\lrb}[1]{\left[#1\right]}
\newcommand{\lrc}[1]{\left\{#1\right\}}
\newcommand{\lrl}[1]{\langle#1\rangle}
\renewcommand{\exp}[1]{\textrm{exp}\lrc{#1}}
\renewcommand{\log}[1]{\textrm{log}\lrp{#1}}
\renewcommand{\d}{\text{d}}
\newcommand{\Prob}[1]{\textrm{Pr}\lrp{#1}}
\renewcommand{\lim}{\stackbin[n\to\infty]{}{\text{lim}}}
\newcommand{\F}{|F|}
\newcommand{\A}{|A|}
\newcommand{\T}{|T|}
\newcommand{\ff}[1]{\frac{#1}{2}}
\title{Revisiting High Dimensional Bayesian Model Selection for Gaussian Regression}
\author{Zikun Yang, Andrew Womack}
\date{}							
\begin{document}
\maketitle
\begin{abstract}
Model selection for regression problems with an increasing number of covariates continues to be an important problem both theoretically and in applications. Model selection consistency and mean structure reconstruction depend on the interplay between the Bayes factor learning rate and the penalization on model complexity. In this work, we present results for the Zellner-Siow prior for regression coefficients paired with a Poisson prior for model complexity. We show that model selection consistency restricts the dimension of the true model from increasing too quickly. Further, we show that the additional contribution to the mean structure from new covariates must be large enough to overcome the complexity penalty. The average Bayes factors for different sets of models involves random variables over the choices of columns from the design matrix. We show that a large class these random variables have no moments asymptotically and need to be analyzed using stable laws. We derive the domain of attraction for these random variables and obtain conditions on the design matrix that provide for the control of false discoveries.
\end{abstract}
\section{Introduction}
We are considering a generic Gaussian regression problem with a response vector $ {y}\in\mathbb{R}^n$ and a design matrix $X\in\mathbb{R}^{n\times p}$, such that 
\begin{equation}
\label{eq::the_model}
\begin{array}{rcl}
 {y}=\alpha \mathbb{1}+ {X\beta}+ {\epsilon}\text{, where $ {\epsilon}\sim N(0,\sigma^2 {I}_{n\times n})$}.
\end{array}
\end{equation}
In \cref{eq::the_model}, $\mathbb{1}$ is a $n$-dimensional vector of 1's, $\alpha$  is an intercept term included in all the models, and $\beta\in \mathbb{R}^{p}$ is a vector of regression coefficients. We assume that the data are generated by the true model $M_T$. The dimensions of the true model and the full model are assumed to be growing with the sample size, hence the problem is in the high-dimensional regime. To address the problem,  we also make a sparse assumption such that a large portion of the coefficients in the full model are actually zero, and the goal here is to recover the exact support set,
\begin{equation}
\label{df::true_model_set}
\begin{array}{rcl}
T&:=&\lrc{i\in\lrc{1,\ldots,p}|\beta_i\neq0},
\end{array}
\end{equation}
of the true model. \\

The high-dimensional sparse solutions for the Gaussian regression has been extensively investigated, and one of the most famous and inspiring methods is the Least Absolute Shrinkage and Selection Operator, so-called the Lasso \cite{tibshirani1996regression}. It is well-known that  the estimation of the Lasso achieves sparsity while suffers from a consistent bias, and there has been tremendous efforts for fixing the inadequacies of the Lasso, such as, the Elastic net \cite{zou2005regularization}, the SCAD \cite{fan2001variable}, etc. Meanwhile, Bayesian variants based on the idea of shrinking  coefficients have also received great attentions, e.g., the Bayesian Lasso \cite{park2008bayesian}, the Horseshoe prior \cite{carvalho2009handling} etc. All of these  methods attain the sparse solution by shrinking certain coefficients to zero, and are assessed by some measurements of predictions. Rather than focusing on the criteria based on various $\mathit{L}_q$ norms $\mathbb{E}||\hat{\beta}-\beta_T ||_q^q$, typically $L_1$ and $L_2$ norms, the current paper pays attentions to variable-selection consistency under 0-1 loss through Bayesian methodology. \\

The consistency of Bayesian model selection has been established in many papers. The general procedure of Bayesian model selection is usually based on the posterior evidence provided by  the Bayes factor, which makes a direct comparison between the marginals of the null and the alternative models  \cite{kass1995bayes}. The mathematical nature of the Bayes factor prevents the deployment of the improper prior, since the nuisance constant in the priors would cause the identifiable problem for any statistical inference. To fix this problem, the intrinsic prior was deliberately designed \cite{berger1996intrinsic} to  cancel out the nuisance constants, and the consistency of model selection for pair-wise comparison between the null and alternative model has been established \cite{casella2006objective}, and for the situations where the dimensions of the models are growing with the sample size \cite{moreno2010consistency}. However, the pair-wise consistency can not guarantee the posterior probability of the true model going to one due to the massive number of the models. \\

Another choice of prior distribution for model selection is famous Zellner's $g$-prior given by, 
\begin{equation}
\begin{array}{rcl}
\beta|\sigma^2,M_A&\sim&\text{N}\lrp{0,g\sigma^2\lrp{X_A'X_A}^{-1}},
\end{array}
\end{equation}
where $A\in\lrc{1,\ldots,p}$ and $X_A$ is the corresponding design matrix composed with the predictors whose indexes are in the set $A$.  The popularity of Zellner's $g$-prior is primarily due to its feasibility of the  computation of the corresponding Bayes factor with a closed form. However, this relatively simple form of Bayes factor  arises several paradoxes of model selection , and many empirical choices for `g' parameter have been proposed to remedy the situation \cite{liang2012mixtures}. Liang et al., \cite{liang2012mixtures} proposed the mixtures of $g$-prior by randomizing  `g' parameter to overcome the paradoxes, and showed the consistency of model selection with the mixtures of $g$-prior, especially with Zellner\&Siow prior \cite{zellner1980posterior}. It is also interested to notice that the intrinsic prior can be represented as the mixture of $g$-prior with a proven consistency of model selection \cite{womack2014inference}.\\

One vital assumption in  these papers is that the dimension of the full model is not growing with the sample size, hence the total number of the models is well controlled. Whether or not the size of the true model is growing also has important consequence on the assumptions of the design matrix and the consistency of model selection. With a fixed true model, the procedures, i.e.,   \cite{narisetty2014bayesian} \cite{womack2015model}, can tolerate more crucial conditions, such as the size of the full model can be growing at an exponential rate of the sample size or the eigenvalues of the gram matrix can converge to zero. On the other hand, the conditions or assumptions would be harsher, if the dimension of the true model is growing, especially when the situation of the sparsity is close to linear sparsity  \cite{wainwright2009information}. Also, when the dimensions are growing, the learning rate of the Bayes factor may break down, e.g., the models nesting the true model become indistinguishable to the true model.  Hence, the prior on the model space is often required to offer extra penalties of the dimensionality. These priors, e.g., the sparsity prior  \cite{castillo2015bayesian} \cite{yang2016computational} or the truncated Poisson prior  \cite{womack2015model}, are all designed to make penalization on the overfitted models and control the model size for models that are not identifiable from the data \cite{li2018mixtures}.\\

In this paper, we show that the model selection consistency through the truncated Poisson prior on the model space and the modified Zellner\&Siow prior on the regression parameters. The paper is organized as follows. In Section 2, the main results is presented. We review the assumptions of the design matrix used in other papers, list the assumptions that we adopt, and emphasize the corresponding impacts on the consistency. The difference between the sparsity prior and the truncated Poisson prior is highlighted. In Section 3, we show an interesting utility of the stable law and the domain of attraction on the overfitted models. Section 4 contains the conclusion and possible directions of future works. The proofs of the theorems are in Section 5.   

\subsection{Notification}
As stated above, let $A$, $T$, and $F$ denote the indices set of the predictors in the testing model, the true model, and the full model respectively. For a vector $\beta\in\mathbb{R}^p$ and a set $A\in\lrc{1,\ldots,p}$, $\beta_A$ is the vector of $\beta_i$s for $i\in A$, and $|A|$ is the cardinality of $A$. Let $M_0$ denote the null model, the model only contains the intercept term. 

\section{Main results}
\subsection{Mixture of $g$-prior \& the Bayes factor}
Now we specify the prior distributions of the parameters. For any model $M_A$, the model and the priors are
\begin{equation}
\begin{array}{rcl}
 {y}&=&\alpha \mathbb{1}+ {X\beta}+ {\epsilon}\\
\epsilon&\sim &N(0,\sigma^2\mathit{I}_{n\times n})\\
\alpha,\sigma^2&\sim& \frac{1}{\sigma^2}\\
\beta_A&\sim& Normal(0,\frac{\sigma^2}{\omega}\lrp{\tilde{X}'_A\tilde{X}_A}^{-1})\\
\omega&\sim& \pi(\omega),
\end{array}
\end{equation}
 where $\omega=\frac{1}{g}$, $\tilde{X}'_A\tilde{X}_A=\frac{1}{n}X_A'X_A$, which corresponds to the recommendation of unit information prior  \cite{kass1995bayes}. Zellner\&Siow \cite{zellner1980posterior} place a multivariate Cauchy distribution on the coefficient vector, which can be represented as a mixture of normal distributions with the mixing parameter $\omega\sim\text{Gamma}\lrp{\frac{1}{2},\frac{1}{2}}$.  The Zellner-Siow prior is a multivariate extension of Jeffreys's idea on the normal mean hypothesis problem, where Jeffrey argued that Cauchy prior is the simplest form to fix the paradoxes mentioned above. \\
 
 The goal of this paper is to show the selection consistency, such that 
\begin{equation}
\label{eq::goal}
\begin{array}{rcl}
\stackbin[n\to\infty]{}{\text{lim}}\text{Pr}\lrp{M_T|y}&=&1,
\end{array}
\end{equation}
which is equivalent to show that 
\begin{equation}
\label{eq::summation}
\begin{array}{rcl}
\stackbin[n\to\infty]{}{\text{lim}}\stackbin[M_A\neq M_T]{}{\sum}\frac{m\lrp{y|M_A}}{m\lrp{y|M_T}}\frac{\pi\lrp{M_A}}{\pi\lrp{M_T}}&=&0.
\end{array}
\end{equation}
 The Bayes factor is defined as the ratio of the marginal densities between two competing models in \cref{eq::summation}, such as
 \begin{equation}
\begin{array}{rcl}
BF_{A:T}=\frac{m\lrp{y|M_A}}{m\lrp{y|M_T}},
\end{array}
\end{equation}
where $m\lrp{y|M_A}$ is the marginal density of the model $M_A$ after integrating out the regression parameters. 
 For simple computation, it is worth noting that the marginal density $m\lrp{y|M_A}$ is invariant if switching $\tilde{X}'_A\tilde{X}_A$ by $X_A'X_A$ and assigning $\text{Gamma}\lrp{\frac{1}{2},\frac{n}{2}}$ to $\omega$.  
  Integrating out the parameters $\alpha$, $\beta_A$, and $\sigma^2$ is straight forward, and the Bayes factor can be represented as an univariate integration of $\omega$, such as
 \begin{equation}
 \label{eq::bayes_int}
\begin{array}{rcl}
BF_{A:0}&=&\int_0^\infty\lrp{1+\omega}^{\frac{n-\A}{2}}\omega^{\ff{\A-1}}\lrp{1-R_A^2+\omega}^{-\ff{n-1}}\pi(\omega)\d \omega,
\end{array}
\end{equation}
where $R_A^2=\frac{y'\lrp{H_A-H_1}y}{y'(I-H_1)y}$ is the coefficient of determination and $H_A$ is the projection matrix generated by the design matrix $X_A$. 
In  \cite{liang2012mixtures}, the authors suggested to approximate the integral in \cref{eq::bayes_int} by the Laplace method  \cite{tierney1986accurate}, which requires to solve a cubic function of $\omega$. Since the dimension of the true model is assumed to be fixed in  \cite{liang2012mixtures}, the solution is asymptotically stable as $n\to\infty$, which is not the case in the current set-up. Actually, the unsatisfied approximation is the main reason that the Z\&S prior never got popular in the first place, even with appealing statistical properties.  \\

In this paper, we choose the Beta-prime distribution with the shape parameters $\frac{1}{2}$ and $\ff{n-\A-1}$ as a modified version of the Z\&S prior. The parameters are deliberately chosen to cancel out the term $\lrp{1+\omega}^{\frac{n-\A}{2}}$ in \cref{eq::bayes_int}, also to maintain an asymptotically equivalent behavior of the density function to the Zellner\&Siow prior at the tail area and the origin. The same set-up has been adopted in  \cite{maruyama2011fully}, and the resulting Bayes factor between the testing model $M_A$ and the true model $M_T$ can be shown as
\begin{equation}
\label{eq::bayes_factor}
\begin{array}{rcl}
BF_{A:T}&=&\frac{\lrp{1-R_T^2}^{{\ff{n-\T-1}}}}{\lrp{1-R_A^2}^{\ff{n-\A-1}}}\frac{\Gamma\lrp{\ff{\A}}\Gamma\lrp{\ff{n-\A}} }{\Gamma\lrp{\ff{\T}}\Gamma\lrp{\ff{n-\T}}},
\end{array}
\end{equation}
where $\Gamma(\bullet)$ is the Gamma function. 
\begin{remark}
The major difference between the current paper and  \cite{yang2016computational} is the choice of the original $g$-prior or the mixture of $g$-prior. Certainly, the original $g$-prior with an empirical choice on $g$, e.g., $g=\text{max}\lrc{n,p^2}$ or $g=p^{2\alpha}$ \cite{yang2016computational}  \cite{kass1995reference}, does exhibit certain flexibility for the model selection. However, the information paradox associated with the original $g$-prior demands more assumptions on the signals to prove the consisitency \cite{liang2012mixtures} \cite{shang2011consistency} \cite{yang2016computational}. Also, when the null model is the true model, consistency
only holds true for the Zellner-Siow prior, but does not hold for the empirical $g$-prior \cite{liang2012mixtures}.  The advantages of adopting the mixture of $g$-prior are easy to observe. First, it avoids the tuning step of $g$, since it has been integrate out for the marginals. Second, it can be shown that either when $M_T=M_N$ or $M_T\neq M_N$, the Bayes factor associated with the mixture of $g$ holds the consistency of the selection, where it is not the case for the empirical $g$-prior. 
\end{remark}

\subsection{Model prior}
In the fixed dimension regime, the prior of models is usually given equal prior probability and would be canceled out during the procedure. Also, the convergence of the pair-wise comparison  is enough for establishing the consistency, since the summation in \cref{eq::summation} is only over a finite number of models. However, things are different in the high-dimensional regime, and the convergence of the pair-wise comparison doesn't ensure the posterior selection consistency in \cref{eq::goal}. \\

For a generic model $M_A$, the prior probability of choosing $M_A$ can be represented as
\begin{equation}
\begin{array}{rcl}
\pi(M_A)&=&\pi(\A){p\choose \A}^{-1},
\end{array}
\end{equation}
where $\pi(\A)$ is the prior probability of the size $\A$, and all the models with same size equally share the same probability mass $\pi(\A)$. In  \cite{castillo2015bayesian}  \cite{narisetty2014bayesian}  \cite{yang2016computational}, the authors chose the sparse prior  to introduce strong penalty on the dimension  with the form
\begin{equation}
\label{eq::castillo}
\begin{array}{rcl}
c_1p^{-c_2}\le&\frac{\pi(\A)}{\pi(\A-1)}&\le c_3p^{-c_4},
\end{array}
\end{equation}
for positive $c_i$. If we assumed $p=O(n)$, then \cref{eq::castillo} is a very strong penalization, and the consistencies have been proved in the papers adopted the sparse prior. \Cref{eq::castillo} is also equivalent to  $\pi(\gamma_j=1)=O(p^{-c_i})$, which is the prior inclusion probability of adding one generic covariate. From this perspective, it is obvious that the prior inclusion probability only depends on the size of the full model without any consideration of the size of the current testing model, which is less adjustable or flexible for a variety of situations. Also, \cref{eq::castillo} lacks a properly realistic explanation to the practitioners at face value. Last but not the least, when comparing the growing true model to a finite model nested in the true model, $\frac{\pi\lrp{M_A}}{\pi\lrp{M_T}}$ in \cref{eq::summation}   increases in an exponential rate of the sample size, which   imposes harsh and unrealistic conditions on  design matrix and signal noise ratio to achieve model selection consistency.\\

We propose to use a truncated Poisson prior on the size of models, such that 
\begin{equation}
\begin{array}{rcl}
\pi(\A)&=&\frac{\lambda^{\A}\exp{-\lambda}}{\A!},
\end{array}
\end{equation}
where $\lambda$ is the rate parameter, which can be tuned by practitioners. The derivation of the truncated Poisson prior comes from using the self-similarity property for model spaces with finite $p$ and letting $p\to\infty$ with an easy proof in \cite{womack2015model}. The prior inclusion probability of the truncated Poisson distribution is only associated with the testing model size $\A$, which is more adjustable than the sparsity prior. When dealing with the overfitted models, the truncated Poisson prior provides a well-designed penalty to convert the summation of the Bayes factors generated by the models with same size to its arithmetic average, hence the convergence is readdressed by the probabilistic property of the average of the Bayes factor instead of the overly-strong dimensional penalty such as the sparsity prior. Also, the average of the Bayes factor is an average of the random variables generated by randomly choosing the extraneous predictors, which leads to an interesting application of the stable law and the domain of attraction. We will give a more detailed explanation in Section 3. 
\subsection{Assumptions}
The assumptions that we will make in this section are crucial to the proof of the consistency. These assumptions reflect the integrity of the problem, i.e., the model with too strict assumptions would lost practical usage to real-world data yet the model without any assumptions could not be proved to attain the consistency. 

\begin{assumption}[Conditions on the design matrix]
\label{am::design_matrix}~\\
\begin{itemize}
 \item[(i)] Assume all the columns of the design matrix are standardized, such as 
\begin{equation}
||X_i||_2^2=n\text{, for all } ~i\in\lrc{1,\ldots,\F}. 
\end{equation}

\item[(ii)]There exists a positive constant $\zeta_{min}$, such that 
\begin{equation}
\begin{array}{c}
\stackbin[A\in\lrc{1,\ldots,p}]{}{\text{min}}\nu_{\text{min}}\lrp{\frac{1}{n}X_A^TX_A}\ge\zeta_{min}
\end{array}
\end{equation}
where $\nu(H)$ is the eigenvalue of a symmetric matrix $H$.
\item[(iii)] We assume a standardized errors constraining assumption, such as, 
\begin{equation}
\begin{array}{rcl}
\pmb{E}\lrb{\stackbin[k\notin B]{}{\text{max}}\stackbin[B\subset F;B\supset T]{}{\text{max}}\lrp{\frac{|<(I-H_{B})x_k,Z>|}{\sqrt{n}}}}\le\ff{\sqrt{\ff{\zeta_{min}}*\log{\log{n}}}},
\end{array}
\end{equation}
where $Z\sim N(0,\mathit{I}_{n\times n})$, and $H_B$ is the projection matrix generated by the design matrix $X_B$.
\end{itemize}
\end{assumption}
\begin{assumption}[Condition on the mean structure]
\label{am::finite_mean}
Given $X_T$, $\beta_T$, and $\sigma_T^2$, assume
\begin{equation}
\lim{\frac{||X_T\beta_T||^2}{n\sigma_T^2}}=C_1
\end{equation}
where $C_1$ is a finite positive constant.  
\end{assumption}
\begin{assumption}[Conditions on the growing rates] 
\label{am::growing_rate}
The growing rate of the full model can be only as fast as a fraction of the sample size, i.e., $\lim\frac{\F}{n}=f<1$.
\end{assumption}

\begin{assumption}[Condition on the minimum signal] Let $\beta_{min}$ denote $\stackbin[i\in T]{}{\text{min}}|\beta_i|$, then
we assume that $\tau_T||\beta_T||_2^2=O(1)$, which implies $\tau_T\beta_{min}^2=\Theta(\frac{1}{\T})$. Specifically, the minimum of the true coefficients is bounded above and below as 
\label{am::min_signal}
\begin{equation}
\frac{C_2}{\T}\le\tau_T\beta_{min}^2\le \frac{C_3}{\T}
\end{equation}
where $C_2$ is a finite constant bounded above by zero, and $C_3$ is a finite constant implied by \Cref{am::finite_mean}.
\end{assumption}
\begin{remark}
In \cref{am::design_matrix}, $(i)$ and $(ii)$ are typically  mild conditions that appear in many literatures considering the consistency, e.g.,  \cite{narisetty2014bayesian} \cite{womack2015model} \cite{yang2016computational} \cite{shang2011consistency}. $(ii)$ in \cref{am::design_matrix} guarantees that each potentially testing model is not too close to be distinguish to each other. $(iii)$
is to constrain the projection of the standardized errors on the space generated by any extraneous predictors excluding the space generated by the design matrix nesting the true model. This assumption is also a mild condition just to prevent any extreme behavior from the overfitted model and their extraneous predictors. 
\\

\Cref{am::finite_mean} is a reasonable condition putting onto the true mean structure, which surprisedly was not considered by many literatures explicitly. Without \cref{am::finite_mean}, under some circumstances, the true mean would grow to infinity with the sample size, which doesn't make any sense. \Cref{am::growing_rate} assumes the growing rate of the full model is linear to the sample size. \Cref{am::min_signal} is related to the information-theoretical capacity of the model to recovery the exact support  \cite{wainwright2009information}. As the growing rate of the true model hitting the limit, i.e., $\frac{n}{\log{n}}$, the minimum of the signal has to correspond to such harsh condition to separate itself from the noise. The reason for this rate is shown in \cref{prop::true_model}.
\end{remark}

\subsection{Result}
\begin{proposition}
\label{prop::true_model}
Suppose that \Cref{am::design_matrix}, \Cref{am::finite_mean}, \Cref{am::growing_rate},  and \Cref{am::min_signal} hold, the fastest growing rate of the true model that the procedure can tolerate is $\mathcal{O}\lrp{\frac{n}{\log{n}}}$.
\begin{proof}
See Section 5.
\end{proof}
\end{proposition}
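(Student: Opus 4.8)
\section*{Proof proposal for \Cref{prop::true_model}}

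The plan is to locate the family of competing models in \cref{eq::summation} whose control is the binding constraint, and to show that controlling exactly this family forces $\T=\mathcal{O}\lrp{n/\log{n}}$. Because the truncated Poisson prior rewards sparser models, the deviations that are hardest to reject are the \emph{underfitted} ones, and among these the most delicate are the models $M_A$ with $A=T\setminus\lrc{i}$ for some $i\in T$, obtained by deleting a single true covariate. I would therefore isolate this family of $\T$ models and evaluate the product $BF_{A:T}\,\pi(M_A)/\pi(M_T)$ for each of them, using the closed form of \cref{eq::bayes_factor} together with the Poisson prior.

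For the prior ratio, a direct computation gives $\pi(M_A)/\pi(M_T)=\lambda^{-1}\,\T\,\binom{p}{\T}/\binom{p}{\T-1}=\lambda^{-1}(p-\T+1)$, so deleting a covariate is \emph{rewarded} by a factor of order $p/\lambda$; the only thing preventing removal of a true covariate is therefore the data. For the Bayes factor, writing $\delta:=(R_T^2-R_A^2)/(1-R_A^2)$, the ratio of determination terms equals $\lrp{1-\delta}^{(n-\T-1)/2}(1-R_A^2)^{-1/2}\approx\exp{-\delta(n-\T)/2}$, while Stirling's formula applied to the Gamma quotient contributes only a polynomial factor of order $\sqrt{(n-\T)/\T}$. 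The crux is to lower bound $\delta$ by the minimum signal: I would use part~$(ii)$ of \Cref{am::design_matrix} to bound $\|(I-H_A)x_i\|^2\gtrsim n\zeta_{min}$, \Cref{am::finite_mean} to control the denominator $y'(I-H_1)y\asymp n\sigma_T^2(1+C_1)$, and \Cref{am::min_signal} to conclude $\delta\gtrsim\tau_T\beta_{min}^2=\Theta(1/\T)$, with the random cross-terms in $y'(H_T-H_A)y$ absorbed with high probability through part~$(iii)$ of \Cref{am::design_matrix}.

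Combining these pieces, the contribution of the single-deletion family to \cref{eq::summation} is of order $\T\cdot(p/\lambda)\cdot\sqrt{(n-\T)/\T}\cdot\exp{-\Theta(n/\T)}$. Taking logarithms, this tends to zero precisely when $\Theta(n/\T)$ dominates $\log{p}+\log{\T}$, and since $p=\mathcal{O}(n)$ under \Cref{am::growing_rate} this reads $n/\T\gg\log{n}$, i.e. $\T=\mathcal{O}(n/\log{n})$. To argue tightness I would show this rate is genuinely binding: deleting $k>1$ covariates costs signal $\asymp k/\T$ against a reward $(p/\lambda)^k$ over $\binom{\T}{k}$ models, so each term obeys the same threshold and the sum over $k$ converges geometrically once the single-deletion term is controlled; the overfitted and mixed families are governed by the model prior and the stable-law analysis of Section~3 and impose no constraint stricter than the $n-\T\ge(1-f)n$ already guaranteed by \Cref{am::growing_rate}.

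The step I expect to be the main obstacle is the uniform lower bound on $\delta$: one must show that the signal drop $R_T^2-R_A^2$ concentrates around its population value $\Theta(1/\T)$ \emph{simultaneously} over all $\T$ single deletions, and more generally over the whole underfitted family, which requires controlling the random quadratic noise terms through part~$(iii)$ of \Cref{am::design_matrix} rather than through naive moment bounds. The accompanying combinatorial subtlety is verifying that the single-deletion family is the genuinely binding one and that no intermediate underfitted model tightens the threshold beyond $n/\T\gg\log{n}$.
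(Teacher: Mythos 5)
Your treatment of the underfitted family is, in substance, the paper's own argument: the paper also splits $BF_{A:T}$ into the determination-ratio term, bounded via \Cref{lm::under_non-centra} by $\exp{-\frac{(c+k)\beta_{min}^2\tau_T\zeta_{min}n}{4}+\ff{c}\log{C_4}}$, and the counting/prior-odds/Gamma terms, bounded by $\exp{\frac{5}{2}c\log{n}+2k\log{n}}$, and then reads off the ceiling by balancing $\Theta\lrp{(c+k)n/\T}$ against $\Theta\lrp{(c+k)\log{n}}$. The only cosmetic difference is that the paper bounds every class $\mathcal{M}_{c,k}$ simultaneously (the exponent is linear in $c+k$) rather than isolating single deletions and summing geometrically. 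One small imprecision on your side: at the boundary $\T\asymp n/\log{n}$ the two exponents are of the \emph{same} order, so convergence is decided by constants; this is exactly where the condition $C_2>10/\zeta_{min}$ of \Cref{tm::main} comes from, and your ``tends to zero precisely when $\Theta(n/\T)$ dominates'' phrasing glosses over it.

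The genuine gap is your final claim that the overfitted and mixed families are ``governed by the model prior'' and ``impose no constraint stricter than $n-\T\ge(1-f)n$.'' This is false, and it is precisely the content of Section 3 and of the overfitted part of Section 5. For $M_A\in\M{c}$ with $\T\asymp n/\log{n}$, the Bayes factor is asymptotically $\exp{\ff{\xi_A}}\lrb{\log{n}}^{-\ff{c}}$ with $\xi_A=\tau_T\epsilon'(H_A-H_T)\epsilon$: the dimension penalty supplied by the Gamma quotient has degraded from a power of $n$ to a power of $\log{n}$. The truncated Poisson prior converts the sum over $\M{c}$ into $\lambda^c/c!$ times the \emph{average} of $m\asymp n^c$ variables $\exp{\ff{\eta_i}}$ with $\eta_i\sim\chi^2(c)$, and these variables have no finite mean; by the paper's stable-law theorem this average is of order $\lrb{\log{m}}^{\ff{c}}\asymp\lrb{\log{n}}^{\ff{c}}$, which cancels the $\lrb{\log{n}}^{-\ff{c}}$ penalty exactly, and Corollary 1 concludes that the resulting contribution does \emph{not} vanish. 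So at the critical rate the model prior alone cannot control this family; what rescues it is assumption $(iii)$ of \Cref{am::design_matrix} together with Gaussian concentration of the Lipschitz functional $V(Z)$, which bounds $\stackbin[M_A\in\M{c}]{}{\text{max}}\xi_A$ by $\ff{c}\log{\log{n}}$ with probability $1-\lrp{\log{n}}^{-\zeta_{min}/16}$ --- the argument your proposal skips entirely, and the source of the probability statement in \Cref{tm::main}. Moreover, the overfitted family pins the same ceiling independently of the underfitted one: at any rate faster than $n/\log{n}$ the penalty $\ff{c}\log{\frac{n-\T}{\T}}$ falls below the $\Theta\lrp{\log{\log{n}}}$ scale of the noise maxima, so the overfitted contribution cannot be driven to zero even under assumption $(iii)$. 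A proof of \Cref{prop::true_model} that ignores this family therefore establishes neither that $n/\log{n}$ is tolerable nor that it is the fastest tolerable rate.
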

Here are the main theorem of this paper. 
 \begin{theorem}
 \label{tm::main}
 Suppose that \Cref{am::design_matrix}, \Cref{am::finite_mean}, \Cref{am::growing_rate},  and \Cref{am::min_signal} hold. 
Suppose $\T=\frac{n}{\log{n}}$. If
\begin{equation}
C_2>\frac{10}{\zeta_{min}},
\end{equation}
then, we have 
\begin{equation}
\stackbin[n\to\infty]{}{\text{lim}}\text{Pr}\lrp{M_T|y}=1
\end{equation}
with probability at least $1-\lrb{\log{n}}^{-\frac{1}{4}}$. 
\begin{proof}
See Section 5. 
\end{proof}
\end{theorem}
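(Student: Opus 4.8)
The plan is to establish selection consistency through \cref{eq::summation}, i.e.\ to show the posterior odds sum $\sum_{M_A\neq M_T} BF_{A:T}\,\frac{\pi(M_A)}{\pi(M_T)}$ vanishes. The first move is to split the competing models into the \emph{underfitted} class $\mathcal{A}_-=\{A:T\not\subseteq A\}$, which omit at least one genuine covariate, and the \emph{overfitted} class $\mathcal{A}_+=\{A:T\subsetneq A\}$, which keep the full support but append spurious columns. The two classes are governed by opposite mechanisms: for $\mathcal{A}_-$ the decisive quantity is the signal deficit $R_T^2-R_A^2$, whereas for $\mathcal{A}_+$ it is the purely stochastic excess fit $R_A^2-R_T^2$ arising from projecting the noise onto superfluous directions.

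For the underfitted class I would combine \Cref{am::min_signal} with the eigenvalue floor $\zeta_{min}$ of \Cref{am::design_matrix}(ii) to obtain a uniform lower bound on the loss of explained variation from dropping a true covariate: each omitted signal reduces $R_T^2$ by at least an amount of order $\zeta_{min}\beta_{min}^2$, while \Cref{am::finite_mean} keeps $1-R_T^2$ bounded away from $0$ and $1$. Exponentiating, the ratio $(1-R_T^2)^{\ff{n-\T-1}}/(1-R_A^2)^{\ff{n-\A-1}}$ decays exponentially in the number of missing covariates at a rate set by $C_2\zeta_{min}$. The hypothesis $C_2>10/\zeta_{min}$ is exactly what makes this decay outpace the combinatorial entropy $\binom{p}{\T}/\binom{p}{\A}$ and the Poisson ratio $\pi(\A)/\pi(\T)$, so that summing the resulting geometric series over all such subsets yields a vanishing contribution.

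For the overfitted class the crucial algebraic simplification is that the Poisson prior, after cancelling $\binom{p}{\A}^{-1}$, collapses the superset sum to
\[
\sum_{j\ge 1}\frac{\lambda^{j}}{j!}\,\overline{BF}_{j},\qquad
\overline{BF}_{j}:=\binom{p-\T}{j}^{-1}\!\!\sum_{\substack{A\supset T\\ \A=\T+j}}\!\! BF_{A:T},
\]
so that each excess dimension $j$ contributes the \emph{arithmetic average} of the Bayes factor over the $\binom{p-\T}{j}$ spurious column choices, weighted by the summable sequence $\lambda^{j}/j!$. Applying Stirling to the Gamma ratio and writing $1-R_A^2=(1-R_T^2)\bigl(1-\tfrac{R_A^2-R_T^2}{1-R_T^2}\bigr)$, each $BF_{A:T}$ is controlled by the excess $R_A^2-R_T^2$, whose numerator under the true model behaves like the $\chi^2_j$-type variable $\epsilon'(H_A-H_T)\epsilon$ taken over random projection directions. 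This is where I would deploy the stable-law and domain-of-attraction analysis of Section 3: since these variables have no asymptotic moments, the average $\overline{BF}_j$ cannot be bounded by its expectation and must be normalized by a stable law instead, with \Cref{am::design_matrix}(iii) --- the maximal bound on the noise projection onto extraneous predictors with its $\log{\log{n}}$ denominator --- supplying the tail control needed to bound $\overline{BF}_j$ with high probability.

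I would then assemble the two contributions together with their high-probability events. The stated failure probability $[\log{n}]^{-1/4}$ should arise from the concentration statements themselves: both the maximal inequality of \Cref{am::design_matrix}(iii) and the $R^2$ fluctuation bounds carry $\log{n}$ or $\log{\log{n}}$ factors that become tight at the boundary rate $\T=n/\log{n}$. I expect the overfitted analysis to be the principal obstacle. A naive union bound over the exponentially many spurious models fails precisely because the individual Bayes factors are heavy-tailed with no finite asymptotic moments; controlling their average through stable laws rather than expectations is the technical heart of the argument, and it is the tension between the growing exponent $\ff{n-\A}$ and the random $R^2$ excess at the critical sparsity $\T=n/\log{n}$ that pins down the threshold $10/\zeta_{min}$.
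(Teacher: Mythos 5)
Your overall architecture matches the paper: you split the model space into models missing at least one true covariate and strict supersets of $T$, you use the non-centrality/eigenvalue mechanism (the paper's Lemma \ref{lm::under_non-centra}, $\lambda_{A\cup T-A}\ge(c+k)\beta_{min}^2\tau_T\zeta_{min}n$) to get exponential decay that beats the $\log{n}$-scale entropy for the first class, and you correctly identify that the Poisson prior collapses the superset sum into $\sum_{j}\frac{\lambda^j}{j!}\overline{BF}_j$, i.e.\ an arithmetic average of Bayes factors. The paper splits your underfitted class into two cases ($\A<\T$, and $\A>\T$ with $A\cap T\neq T$), but the mechanism and the resulting threshold $C_2>\frac{10}{\zeta_{min}}$ are exactly as you describe.

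The genuine gap is in your overfitted class. You propose to control $\overline{BF}_j$ by ``deploying the stable-law and domain-of-attraction analysis of Section 3,'' with Assumption \ref{am::design_matrix}(iii) supplying tail control inside that analysis. But in the paper the stable law plays the \emph{opposite} role: Theorem 2 and its corollary show that if the variables $\delta_i=\exp{\frac{\eta_i}{2}}$, $\eta_i\sim\chi^2(c)$, are treated as i.i.d., then $\overline{\delta_i}$ grows like a slowly varying function of $m\asymp n^c$ that \emph{exactly cancels} the $\mathcal{O}\lrp{\log{n}^{-\frac{c}{2}}}$ decay of the Gamma-function ratio, so the superset sum does \emph{not} vanish. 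The stable law is a negative result used to argue that consistency is impossible without an assumption like (iii); it cannot also be the engine of the convergence proof, because Assumption \ref{am::design_matrix}(iii) is precisely what destroys the heavy-tailed i.i.d.\ regime in which that stable-law limit holds. The paper's actual argument is different and more elementary: it bounds the whole average by the maximum, writes $\xi_A=\tau_T\epsilon'(H_A-H_T)\epsilon$ as a telescoping sum of $c$ rank-one projection increments, lower-bounds each denominator $x_i'(I-H_{i-1})x_i$ by $n\zeta_{min}$ via Lemma \ref{lm::sub_eigen}, and so reduces everything to the maximal statistic $V(Z)=\max_{B\supset T,\,k\notin B}\frac{|\langle(I-H_B)x_k,Z\rangle|}{\sqrt{n}}$. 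This $V$ is shown to be $1$-Lipschitz, so Gaussian concentration of measure (Vershynin, Theorem 5.2.2) gives $\Prob{V(Z)\ge \mathbf{E}[V(Z)]+t}\le\exp{-\frac{t^2}{2}}$, and Assumption \ref{am::design_matrix}(iii) bounds $\mathbf{E}[V(Z)]$ by $\frac{1}{2}\sqrt{\frac{\zeta_{min}}{2}\log{\log{n}}}$; together these yield the failure probability of order $\lrp{\log{n}}^{-\frac{\zeta_{min}}{16}}$ that underlies the probability statement in the theorem. Without replacing your stable-law step by such a maximal/concentration argument, the overfitted case of your proof does not go through.
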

The constant $C_2$ characterizes the relationship between the minimums of the eigenvalues and the signals, which all hinges on the growing rate of the true model.   The probability in \cref{tm::main} is determined by the worst scenario, where the testing models are  the overfitted models with only one extraneous predictor. The reason behind such a slowly convergent rate is due the fast growing rate of  the true model. \\

It is reasonable to consider the scenario when the growing rate of the true model is $\mathcal{O}(n^{d})$ for $d<1$. It can be shown that the relatively strict assumptions can be loosen, and the probability associated with the consistency would be faster on a different order. 
\section{Stable law}
One interesting encounter from the proof is an application of the stable law and the domain of attraction, when dealing with the overfitted models. Let $\mathcal{M}_c$ denote the model set of  overfitted models with number of $c$ extraneous predictors, such that
\begin{equation}
\label{df::model_set}
\begin{array}{rcl}
\mathcal{M}_c&:=&\lrc{M_A;A\supset T, ~\A-\T=c}.
\end{array}
\end{equation}
Then, it is straight forward to show 
\begin{equation}
\begin{array}{rcl}
\stackbin[M_A\in\mathcal{M}_c]{}{\sum}\frac{m\lrp{y|M_A}}{m\lrp{y|M_T}}\frac{m\lrp{M_A}}{m\lrp{M_T}}=\frac{\overline{BF_{A:T}}}{c!}\lambda^{c},
\end{array}
\end{equation}
where  $\overline{BF_{A:T}}$ is the arithmetic average of the Bayes factor over the model set $\mathcal{M}_c$. As $\F-\T\to\infty$, each Bayes factor can be seen as a random variable which is randomly chosen from the set $\mathcal{M}_c$. Furthermore, as shown in \cref{eq::bayes_factor}, the Bayes factor can be well approximated as
\begin{equation}
\label{eq::approx}
\begin{array}{rcl}
\text{\cref{eq::bayes_factor}}&\approx& \exp{\ff{\eta_i}}\frac{\Gamma\lrp{\ff{\A}}\Gamma\lrp{\ff{n-\A}} }{\Gamma\lrp{\ff{\T}}\Gamma\lrp{\ff{n-\T}}},
\end{array}
\end{equation}
where $\eta_i\sim\chi^2(c)$ for $i=1,\ldots,{\F-\T \choose c}$, and the covergent rate of the fractions associated with the terms of the Gamma functions is $\mathcal{O}\lrp{\log{n}^{-\ff{c}}}$. Hence, the average over the Bayes factor is equivalent to the average over the random variable $\exp{\ff{\eta_i}}$, which is generated by  models $M_A\in\mathcal{M}_c$. However, a simple integration can show that the r.v. $\delta_i=\exp{\ff{\eta_i}}$ doesn't have any finite moment. Hence, to show the convergence of the average, the stable law is required. The next theorem concludes the convergence of the average for i.i.d. situation. 
\begin{theorem}
Suppose $\delta_i=\exp{\ff{\eta_i}}$ and $\eta_i\stackbin[i.i.d.]{}{\sim}\chi^2(c)$, where $c$ is a finite positive integer and $i=1,\ldots,m$. Then given the constants $a_m=m*\frac{\log{m}^{\ff{c}-1}}{\Gamma\lrp{\ff{c}}}$ and $b_m=\frac{\log{a_m}^{\ff{c}}}{\Gamma\lrp{\ff{c}+1}}$, we have
\begin{equation}
\begin{array}{rcl}
\lim\frac{\sum_{i=1}^m\delta_i-mb_m}{a_m}&=&S_\alpha(1,1,0),
\end{array}
\end{equation}
where $S_\alpha(1,1,0)$ is a Cauchy random variable. 
\end{theorem}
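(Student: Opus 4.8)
The plan is to reduce the statement to a classical domain-of-attraction theorem by first pinning down the tail of a single summand. Changing variables in the $\ch(c)$ density through $\delta=\exp{\ff{\eta}}$, so that $\eta=2\log{\delta}$ and $\d\eta=(2/\delta)\,\d\delta$, the density of $\delta$ on $(1,\infty)$ becomes
\begin{equation}
f_\delta(y)=\frac{\lrp{\log{y}}^{\ff{c}-1}}{\Gamma\lrp{\ff{c}}\,y^2}.
\end{equation}
Substituting $u=\log{y}$ in the survival integral turns $\Prob{\delta>t}$ into an upper incomplete Gamma integral $\frac{1}{\Gamma\lrp{\ff{c}}}\int_{\log{t}}^{\infty}u^{\ff{c}-1}e^{-u}\,\d u$, whose standard large-argument asymptotics give
\begin{equation}
\Prob{\delta>t}\sim\frac{\lrp{\log{t}}^{\ff{c}-1}}{\Gamma\lrp{\ff{c}}\,t}\qquad\text{as }t\to\infty.
\end{equation}
Thus the right tail is regularly varying of index $-1$ with slowly varying factor $L(t)=\lrp{\log{t}}^{\ff{c}-1}/\Gamma\lrp{\ff{c}}$, and since $\delta>1$ almost surely the left tail vanishes identically. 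This is precisely the tail signature of the domain of attraction of a stable law of index $\alpha=1$ that is totally skewed to the right ($\beta=1$).

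Second, I would verify that the quoted constants are the correct norming sequences. The scale $a_m$ is pinned by the requirement $m\,\Prob{\delta>a_m}\to1$. Writing $\log{a_m}=\log{m}+\lrp{\ff{c}-1}\log{\log{m}}-\log{\Gamma\lrp{\ff{c}}}\sim\log{m}$ and inserting $a_m=m\,\lrp{\log{m}}^{\ff{c}-1}/\Gamma\lrp{\ff{c}}$ into the tail gives $m\,\Prob{\delta>a_m}\to1$, confirming the scaling. For $\alpha=1$ the centering is the truncated first moment; computing it directly with $u=\log{y}$ yields
\begin{equation}
\pmb{E}\lrb{\delta\,\mathbb{1}\lrc{\delta\le a_m}}=\frac{1}{\Gamma\lrp{\ff{c}}}\int_{0}^{\log{a_m}}u^{\ff{c}-1}\,\d u=\frac{\lrp{\log{a_m}}^{\ff{c}}}{\Gamma\lrp{\ff{c}+1}}=b_m,
\end{equation}
so the correct recentering of the sum of $m$ i.i.d.\ terms is exactly $m b_m$, matching the statement.

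Third, I would run the characteristic-function argument for convergence. Writing $S_m=\sum_{i=1}^m\delta_i$, the characteristic function of the normalized sum factors as $\exp{-\mathrm{i}t\,m b_m/a_m}\lrp{\varphi_\delta(t/a_m)}^m$, where $\varphi_\delta$ is the characteristic function of $\delta$. Expanding $\log{\varphi_\delta(s)}$ for $s\to0$ through the tail asymptotics separates a truncated-mean piece, which is cancelled precisely by the $m b_m/a_m$ recentering, from the genuinely heavy-tailed piece. Because the tail is regularly varying of index $-1$ with no left tail, this residual converges to the one-sided $\alpha=1$ Lévy exponent $-\sigma|t|\lrp{1+\mathrm{i}\beta\,\tfrac{2}{\pi}\,\mathrm{sgn}(t)\log{|t|}}$, and the normalization $m\,\Prob{\delta>a_m}\to1$ together with the vanishing left tail fixes $\sigma=1$, $\beta=1$, $\mu=0$. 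By the Gnedenko--Kolmogorov criterion and Lévy's continuity theorem this yields convergence in distribution, as $m\to\infty$, of $(S_m-m b_m)/a_m$ to $S_\alpha(1,1,0)$ with $\alpha=1$, the asserted Cauchy-type limit.

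The main obstacle is the $\alpha=1$ boundary case, which is notoriously the most delicate in stable-limit theory: the centering does not stabilize but drifts logarithmically, and the $\log{|t|}$ term in the limiting exponent emerges only after a careful cancellation between the truncated-mean drift and the imaginary part of $\log{\varphi_\delta(t/a_m)}$. The slowly varying factor $\lrp{\log{t}}^{\ff{c}-1}$ must be carried through every estimate without disturbing the index-$1$ regular variation or shifting the constants, and confirming that the scale equals exactly $1$ (rather than an extraneous factor such as $\pi/2$) under the stated norming is where the bookkeeping is heaviest.
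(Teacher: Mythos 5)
There is no paper proof to compare against here: the paper states this theorem in Section~3 without any argument, and its appendix only proves the model-selection theorem. So your proposal has to stand on its own. Judged that way, your route is the standard and correct one, and the substantive computations are right: the change of variables giving the density $f_\delta(y)=\lrp{\log{y}}^{\ff{c}-1}/\lrp{\Gamma\lrp{\ff{c}}y^2}$ on $(1,\infty)$ is correct; the incomplete-gamma asymptotics $\Prob{\delta>t}\sim\lrp{\log{t}}^{\ff{c}-1}/\lrp{\Gamma\lrp{\ff{c}}t}$ correctly exhibit a right tail that is regularly varying of index $-1$ with vanishing left tail; and your verifications that $m\Prob{\delta>a_m}\to1$ for the stated $a_m$ and that the truncated mean $\pmb{E}\lrb{\delta\mathbb{1}\lrc{\delta\le a_m}}$ equals exactly $b_m=\lrp{\log{a_m}}^{\ff{c}}/\Gamma\lrp{\ff{c}+1}$ are both accurate. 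These steps genuinely place $\delta$ in the domain of attraction of a totally skewed $1$-stable law with the theorem's norming sequences.

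The gap is the final parameter identification, which you assert rather than derive: the claim that $m\Prob{\delta>a_m}\to1$ together with the vanishing left tail ``fixes $\sigma=1$, $\beta=1$, $\mu=0$'' is not correct in the parametrization you invoke. Completing the characteristic-function computation you sketch, the limit of the log-characteristic function of $(S_m-mb_m)/a_m$ is $\int_0^\infty\lrp{e^{itx}-1-itx\mathbb{1}\lrc{x\le1}}x^{-2}\d x=-\frac{\pi}{2}|t|-it\,\log{|t|}+i(1-\gamma)t$, where $\gamma$ is Euler's constant; in the Samorodnitsky--Taqqu parametrization this is $S_1\lrp{\pi/2,\,1,\,1-\gamma}$, not $S_1(1,1,0)$. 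In other words, the factor $\pi/2$ you flagged as a worry, together with an Euler-constant location shift, genuinely appears under exactly the norming and centering in the statement; you identified this as ``where the bookkeeping is heaviest'' but left it unresolved, and the resolution contradicts the triple you wrote down. To be fair, this defect is inherited from the paper's own statement, which is loose on the same point and moreover calls the $\beta=1$ limit ``a Cauchy random variable,'' although only the symmetric ($\beta=0$) $1$-stable law is Cauchy. Your argument, carried to completion, proves the theorem as it ought to be stated --- convergence to a totally skewed $1$-stable law --- with the parameters $(1,1,0)$ attainable only after a further deterministic rescaling and recentering, or under a different parametrization convention, and that reconciliation is the piece missing from your write-up.
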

This theorem reveals the fact that the average of the Bayes factor converges to a Cauchy r.v. and a slowing varying function associated with the total number of the models in the set $\mathcal{M}_c$. Since the convergent power provided by the Bayes factor is only $\mathcal{O}\lrp{\log{n}^{-\ff{c}}}$, the corollary next shows that it is impossible to show the convergence for the overfitted models without assumptions controlling the behavior of the errors. 
\begin{corollary}
Given a dimensional difference $c$ and the corresponding model set $\mathcal{M}_c$ defined in \cref{df::model_set}, we have 
\begin{equation}
\begin{array}{rcl}
\lim\stackbin[M_A\in\mathcal{M}_c]{}{\sum}\frac{m\lrp{y|M_A}}{m\lrp{y|M_T}}\frac{m\lrp{M_A}}{m\lrp{M_T}}&\neq&0.
\end{array}
\end{equation}
\begin{proof}
Let $m={\F-\T\choose c}\asymp n^{c}$ denote the number of the models in $\mathcal{M}_c$. We have seen that the summation above can be represented the average of the r.v. $\delta_i$, and the Gamma functions in \cref{eq::approx} only has the rate of $\mathcal{O}\lrp{\log{n}^{-\ff{c}}}$. Suppose $\delta_i$s are i.i.d., hence 
  \begin{equation}
\begin{array}{rcl}
\stackbin[M_A\in\mathcal{M}_c]{}{\sum}\frac{m\lrp{y|M_A}}{m\lrp{y|M_T}}\frac{m\lrp{M_A}}{m\lrp{M_T}}&\approx& \overline{\delta_i}*\log{n}^{-\ff{c}}\\
&\to&\frac{\log{m}^{\ff{c}-1}}{\Gamma\lrp{\ff{c}}}\lrb{S_\alpha(1,1,0)+\frac{\log{m}}{\ff{c}}}\log{n}^{-\ff{c}}\\
&\to&h(c)\neq0,
\end{array}
\end{equation}
where $h(c)$ is a function of the difference $c$, and not equal to 0 if $c$ is a finite positive integer. 
\end{proof}
\end{corollary}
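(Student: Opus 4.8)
The plan is to read the corollary as an assembly of two results already in hand: the reduction of the prior-weighted Bayes-factor sum to an arithmetic average of the $\delta_i$, and the stable-law limit of that average. I would begin from the identity $\sum_{M_A\in\M{c}}\frac{m\lrp{y|M_A}}{m\lrp{y|M_T}}\frac{m\lrp{M_A}}{m\lrp{M_T}}=\overline{BF_{A:T}}\lambda^{c}/c!$ and substitute the approximation in \cref{eq::approx}. This separates each Bayes factor into the stochastic factor $\delta_i=\exp{\ff{\eta_i}}$ with $\eta_i\sim\chi^2(c)$ and the deterministic Gamma ratio, so that, up to the fixed constant $\lambda^c/c!$, the whole sum is controlled by $\overline{\delta_i}$ times that ratio.

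First I would fix the two deterministic sequences that set the scale. Counting overfitted models, $m=\binom{\F-\T}{c}\asymp n^{c}$ follows from \cref{am::growing_rate} together with $\T=n/\log{n}$ (so that $\F-\T\asymp n$), whence $\log{m}\asymp c\,\log{n}$. Next I would confirm the advertised rate $\log{n}^{-\ff{c}}$ of the Gamma ratio: writing $\A=\T+c$ and applying $\Gamma(x+a)/\Gamma(x)\sim x^{a}$ to each factor collapses the ratio to $\lrp{\T/(n-\T)}^{\ff{c}}\sim\log{n}^{-\ff{c}}$, using $n-\T\sim n$. These are the only nontrivial deterministic ingredients.

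Then I would invoke the preceding stable-law theorem under the stated i.i.d. hypothesis, giving $\sum_{i=1}^{m}\delta_i=mb_m+a_mS_\alpha(1,1,0)+o_p(a_m)$ with $a_m=m\log{m}^{\ff{c}-1}/\Gamma\lrp{\ff{c}}$ and $b_m=\log{a_m}^{\ff{c}}/\Gamma\lrp{\ff{c}+1}$. Dividing by $m$ and using $\log{a_m}\sim\log{m}$ gives $\overline{\delta_i}\sim b_m+(a_m/m)S_\alpha$, which, after factoring $\log{m}^{\ff{c}-1}/\Gamma\lrp{\ff{c}}$ and using $\Gamma\lrp{\ff{c}+1}=\ff{c}\,\Gamma\lrp{\ff{c}}$, takes the bracket form $\frac{\log{m}^{\ff{c}-1}}{\Gamma\lrp{\ff{c}}}\lrb{S_\alpha+\log{m}/\ff{c}}$ displayed in the statement. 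Multiplying by $\log{n}^{-\ff{c}}$ and substituting $\log{m}\sim c\,\log{n}$, the deterministic piece tends to $c^{\ff{c}}/\Gamma\lrp{\ff{c}+1}=:h(c)$, while the stochastic piece picks up an extra factor $\log{n}^{-1}\to0$; absorbing $\lambda^c/c!$ into the constant, the sum tends to $h(c)\ne0$.

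The crux will be the exact power balancing in the last step: the Gamma ratio decays like $\log{n}^{-\ff{c}}$, but the centering $b_m$ of the stable average grows like $\log{n}^{\ff{c}}$, and these exponents must cancel precisely to leave a finite, strictly positive limit rather than $0$ or $+\infty$. This forces me to track the rate of \cref{eq::approx} and the centering sequence $b_m$ simultaneously to leading constant order, and it is exactly here that the heavy-tailed fluctuation $S_\alpha$ is revealed to be asymptotically negligible (down by $\log{n}^{-1}$), so the obstruction to consistency is a deterministic constant. That $h(c)\ne0$ for every finite positive integer $c$ is then immediate from its closed form.
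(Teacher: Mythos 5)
Your proposal is correct and follows essentially the same route as the paper's own proof: reduce the prior-weighted sum over $\mathcal{M}_c$ to the average $\overline{\delta_i}$ times the Gamma-function ratio, invoke the stable-law theorem to get $\overline{\delta_i}\approx b_m+(a_m/m)S_\alpha(1,1,0)$, and balance the powers of $\log{n}$ so that the deterministic centering $b_m$ survives against the $\lrp{\log{n}}^{-\frac{c}{2}}$ decay while the Cauchy fluctuation is smaller by a factor $\lrp{\log{n}}^{-1}$. The only difference is that you fill in details the paper leaves implicit, namely the explicit verification that the Gamma ratio decays like $\lrp{\log{n}}^{-\frac{c}{2}}$ via $\Gamma(x+a)/\Gamma(x)\sim x^{a}$, and the closed form $h(c)=c^{\frac{c}{2}}/\Gamma\lrp{\frac{c}{2}+1}$ (up to the constant $\lambda^{c}/c!$), which the paper leaves as an unspecified nonzero function of $c$.
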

This corollary shows that the consistency requires extra assumptions, e.g., (iii) in \cref{am::design_matrix}, to control the projection of the errors on the subspace generated by the non-true predictors, when the growing rate of the true model has reached the limit, i.e., $\frac{n}{\log{n}}$. If the growing rate of the true model is slower than the limit, then the convergent rate provided by the Bayes factor will overcome the slowly varying function in $a_m$ and $b_m$.
\section{Conclusion}
In this paper, the model selection problem of Gaussian regression in high-dimensional regime has been studied. The mixture of $g$-prior and the truncated Poisson prior have been proposed to show the consistency of model selection. We presented the consistency theorem under the situation of extreme growing rate. It showed that when the limit of the growing rate of the true model is approached, the consistency theorem requires more strict assumptions. The stable law has been applied to make the argument of the unidentifiable situation for the overfitted models, when there is no extra assumptions on the extraneous predictors and the growing rate of the true model reaches the limit. \\

For the next step, it is interesting to further investigate  the stable law under the dependent situation due to the possible  dependency structure between the extraneous covariates. Another direction would be to discuss the situation when the size of the full model is greater than the sample size. The authors in  \cite{yang2016computational} simply truncated the model space by assigning zero probability to the models whose size is greater than the sample size. We consider to adopt the PCA method to reduce the dimension as the first step of the model selection. 
\section{Appendix}
\subsection{Lemmas}
Let $H_A$ denote the projection matrix onto the span of $\lrc{X_A,A\in F}$. 
\begin{lemma}
\label{lm::non-ex}
A projection is a non-expensive mapping\cite{yang2016computational}. In the words, for any column $x_i$, $i\in F$, 
\begin{equation}
\begin{array}{ccc}
||H_Ax_i||&\le&||x_i||
\end{array}
\end{equation}
\end{lemma}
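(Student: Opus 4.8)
The plan is to use the two defining algebraic properties of an orthogonal projection: symmetry $H_A^T = H_A$ and idempotence $H_A^2 = H_A$. Since $H_A$ is the orthogonal projection onto the span of the columns of $X_A$, both properties hold, and together they force the residual $(I - H_A)x_i$ to be orthogonal to the projected component $H_A x_i$. Once orthogonality is in hand, the inequality is an immediate consequence of the Pythagorean identity, so no analytic estimation is required.

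First I would write the trivial decomposition $x_i = H_A x_i + \lrp{I - H_A}x_i$ and compute the inner product of the two summands. Using symmetry to move $H_A$ across the inner product and then idempotence to collapse $H_A^2$, one obtains
\begin{equation}
\lrl{H_A x_i,\lrp{I-H_A}x_i} = x_i^T H_A^T \lrp{I - H_A} x_i = x_i^T \lrp{H_A - H_A^2} x_i = 0.
\end{equation}
This is the only computation of substance in the argument, and it is where the two projection properties are actually consumed.

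Having established orthogonality, I would apply the Pythagorean theorem to the decomposition above, giving
\begin{equation}
||x_i||^2 = ||H_A x_i||^2 + ||\lrp{I - H_A}x_i||^2 \ge ||H_A x_i||^2,
\end{equation}
and taking square roots yields $||H_A x_i|| \le ||x_i||$, as claimed. I do not anticipate any genuine obstacle here: the result is a standard elementary fact about orthogonal projections, and the cited reference treats it as known. An equally short alternative would be to observe that the eigenvalues of a symmetric idempotent matrix lie in $\lrc{0,1}$, so the spectral (operator) norm of $H_A$ is at most one and the bound follows directly; I would present the orthogonal-decomposition version since it is fully self-contained and requires no appeal to spectral theory.
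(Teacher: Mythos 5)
Your proof is correct, but it takes a genuinely different route from the paper's. The paper argues via Cauchy--Schwarz: it writes $||H_Ax_i||^2=\langle H_Ax_i,H_Ax_i\rangle=x_i'H_Ax_i=\langle x_i,H_Ax_i\rangle$ --- consuming symmetry and idempotence at exactly the same point you do --- and then bounds $\langle x_i,H_Ax_i\rangle\le||x_i||\,||H_Ax_i||$, cancelling a factor of $||H_Ax_i||$ to conclude. Your orthogonal-decomposition argument buys two small advantages: it avoids that cancellation step, which strictly speaking requires disposing of the trivial case $H_Ax_i=0$ separately (the paper leaves this implicit), and it produces the exact identity
\begin{equation}
||x_i||^2=||H_Ax_i||^2+||\lrp{I-H_A}x_i||^2,
\end{equation}
whose residual term $||\lrp{I-H_A}x_i||$ is precisely the quantity controlled in \Cref{lm::subspace}, so your version makes the follow-up lemma immediate rather than a separate appeal to this one. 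What the paper's route buys in exchange is brevity: a single chain of equalities plus one inequality, with no need to verify orthogonality of the two components. Your spectral-norm alternative (eigenvalues of a symmetric idempotent matrix lie in $\lrc{0,1}$) is also valid; all three arguments rest on the same two algebraic properties of $H_A$ and are interchangeable here.
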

\begin{proof}
\begin{equation}
\begin{array}{ccc}
||H_Ax_i||^2&=&\langle H_Ax_i,H_Ax_i\rangle\\
&=&x_i'H_Ax_i\\
&=&\lrl{x_i,H_Ax_i}\\
&\stackbin[]{(i)}{\le}&||x_i| ||H_Ax_i||
\end{array}
\end{equation}
(i) is by the Cauchy-Schwarz inequality. 
\end{proof}
\begin{lemma}
\label{lm::subspace}
For any column $x_i$, $i\in F$,
\begin{equation}
\frac{1}{\sqrt{n}}|| \lrp{{\mathbf I}-H_A}x_i ||\le1
\end{equation}
\begin{proof}
Direct result from use of \Cref{lm::non-ex} and $(i)$ in \Cref{am::design_matrix}.
\end{proof}
\end{lemma}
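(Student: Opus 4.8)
The plan is to recognize that the residual operator $I-H_A$ is itself an orthogonal projection and then invoke the non-expansive property already established in \Cref{lm::non-ex}. Since $H_A$ is symmetric and idempotent, a short computation gives $\lrp{I-H_A}'=I-H_A$ and $\lrp{I-H_A}^2=I-2H_A+H_A^2=I-H_A$, so $I-H_A$ projects onto the orthogonal complement of the column space of $X_A$. This identification is the only conceptual step; everything that follows is mechanical.

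First I would apply \Cref{lm::non-ex} with the projection $I-H_A$ playing the role of $H_A$, which yields $||\lrp{I-H_A}x_i||\le||x_i||$. Equivalently, one may argue directly through the orthogonal decomposition $||x_i||^2=||H_Ax_i||^2+||\lrp{I-H_A}x_i||^2$, valid because $H_Ax_i$ and $\lrp{I-H_A}x_i$ are orthogonal; discarding the nonnegative term $||H_Ax_i||^2$ produces the same bound $||\lrp{I-H_A}x_i||\le||x_i||$. Either route reduces the claim to controlling $||x_i||$ alone.

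Then I would substitute the standardization condition $(i)$ of \Cref{am::design_matrix}, namely $||X_i||_2^2=n$ for every $i\in F$. This gives $||\lrp{I-H_A}x_i||\le\sqrt{n}$, and dividing both sides by $\sqrt{n}$ delivers the claimed inequality $\frac{1}{\sqrt{n}}||\lrp{I-H_A}x_i||\le1$.

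There is no genuine obstacle here: the statement is an immediate corollary of \Cref{lm::non-ex} once one observes that $I-H_A$ inherits the projection structure, combined with the column normalization. The only point requiring care is to make explicit that $I-H_A$ is a projection, since \Cref{lm::non-ex} is phrased for $H_A$ and its conclusion must be transferred to the complementary projection before the normalization is applied.
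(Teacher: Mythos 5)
Your proof is correct and takes essentially the same route as the paper: the paper's own proof is just the one-line citation of \Cref{lm::non-ex} together with condition $(i)$ of \Cref{am::design_matrix}, and your argument supplies exactly the intended details (that $I-H_A$ is itself an orthogonal projection, hence non-expansive, followed by the normalization $\|x_i\|_2^2=n$). Making explicit that the non-expansive property transfers to the complementary projection is the right thing to spell out, since the paper leaves it implicit.
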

\begin{lemma}
\label{lm::sub_eigen}
Under $(ii)$ \Cref{am::design_matrix}, for any pair $A\in F$, $A'\in F$, $A\subset A'$, we have
\begin{equation}
\begin{array}{c}
\nu_{\text{min}}\lrp{\frac{1}{n}X'_{A'/A}\lrp{{\mathbf I}-H_A}X_{A'/A}}\ge \zeta_{min}
\end{array}
\end{equation}
\begin{proof}
W.t.l.g., assume $X_{A'}=\lrc{x_A,x_{A'/A}}$, then by the formula of the blockwise inversion, one can show that the lower right corner of the matrix $\lrp{\frac{1}{n}{X_{A'}}'X_{A'}}^{-1}$ is $\lrp{\frac{1}{n}X'_{A'/A}\lrp{{\mathbf I}-H_A}X_{A'/A}}^{-1}$. The rest follows. 
\end{proof}
\end{lemma}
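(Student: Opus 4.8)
The plan is to identify the matrix $\frac{1}{n}X'_{A'/A}({\mathbf I}-H_A)X_{A'/A}$ as a Schur complement inside the scaled Gram matrix of $X_{A'}$, and then use the fact that inversion of a positive definite matrix converts a lower bound on the smallest eigenvalue of the whole matrix into a lower bound on the smallest eigenvalue of the relevant block.

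First, ordering the columns of $X_{A'}$ so that those indexed by $A$ come first, I would write
\[
M := \frac{1}{n}X'_{A'}X_{A'} =
\begin{pmatrix}
\frac{1}{n}X'_AX_A & \frac{1}{n}X'_AX_{A'/A}\\
\frac{1}{n}X'_{A'/A}X_A & \frac{1}{n}X'_{A'/A}X_{A'/A}
\end{pmatrix}.
\]
Applying the blockwise inversion formula and substituting $H_A = X_A(X'_AX_A)^{-1}X'_A$, a direct computation of the Schur complement of the top-left block shows that the lower-right block of $M^{-1}$ equals $G^{-1}$, where
\[
G := \frac{1}{n}X'_{A'/A}({\mathbf I}-H_A)X_{A'/A}
\]
is precisely the matrix whose smallest eigenvalue we wish to bound (the factors of $\frac{1}{n}$ cancel correctly since $(\frac{1}{n}X'_AX_A)^{-1} = n(X'_AX_A)^{-1}$).

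Next I would compare eigenvalues. Since $(ii)$ of \Cref{am::design_matrix} applied to the index set $A'$ gives $\nu_{\text{min}}(M) \ge \zeta_{min} > 0$, the matrix $M$ is positive definite and so is $M^{-1}$, with $\nu_{\text{max}}(M^{-1}) = 1/\nu_{\text{min}}(M)$. By the Cauchy interlacing theorem, the largest eigenvalue of any principal submatrix of $M^{-1}$ is at most $\nu_{\text{max}}(M^{-1})$; in particular $\nu_{\text{max}}(G^{-1}) \le \nu_{\text{max}}(M^{-1}) = 1/\nu_{\text{min}}(M)$. Because $\nu_{\text{max}}(G^{-1}) = 1/\nu_{\text{min}}(G)$, inverting this inequality yields
\[
\nu_{\text{min}}(G) \ge \nu_{\text{min}}(M) \ge \zeta_{min},
\]
which is the desired conclusion.

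The one step that requires care is the eigenvalue comparison: the object being bounded is a principal submatrix of $M^{-1}$ rather than of $M$, so interlacing delivers an \emph{upper} bound on its top eigenvalue, which only becomes the wanted \emph{lower} bound on $\nu_{\text{min}}(G)$ after passing through the inverse. Verifying the Schur complement identity and the relation $\nu_{\text{max}}(G^{-1}) = 1/\nu_{\text{min}}(G)$ is routine linear algebra, so this sign-of-inequality bookkeeping is the main conceptual point to get right.
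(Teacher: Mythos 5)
Your proof is correct and takes essentially the same route as the paper: both arguments hinge on the blockwise-inversion (Schur complement) identity showing that $\frac{1}{n}X'_{A'/A}\left({\mathbf I}-H_A\right)X_{A'/A}$ inverts to the lower-right block of $\left(\frac{1}{n}X'_{A'}X_{A'}\right)^{-1}$. Your explicit final step --- bounding $\nu_{\text{max}}$ of that principal submatrix of the inverse by $1/\nu_{\text{min}}\left(\frac{1}{n}X'_{A'}X_{A'}\right)$ via interlacing and then inverting back --- is exactly the detail the paper compresses into ``the rest follows,'' including the correct handling of the direction of the inequality.
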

\begin{lemma}
For any $n\times 1$ vector $a$ and a generic symmetric matrix $A$, we have
\begin{equation}
\zeta_{max}\ge\frac{a'Aa}{a'a}\ge \zeta_{min}
\end{equation}
where $\zeta_{min}$ and $\zeta_{max}$ are the smallest and largest eigenvalues of $A$.
\end{lemma}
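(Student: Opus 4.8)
The plan is to invoke the spectral theorem for real symmetric matrices, which is precisely where the symmetry hypothesis earns its keep. Since $A$ is real and symmetric, it admits an orthonormal eigenbasis: there exists an orthogonal matrix $Q$ (so that $Q'Q = QQ' = I$) together with a diagonal matrix $\Lambda = \mathrm{diag}(\lambda_1,\ldots,\lambda_n)$ collecting the (necessarily real) eigenvalues, with $A = Q\Lambda Q'$. I would order the eigenvalues so that $\zeta_{min} = \lambda_1 \le \cdots \le \lambda_n = \zeta_{max}$, and throughout assume $a \neq 0$, since the quotient is only defined in that case.

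Next I would change coordinates to diagonalize the problem. Setting $b = Q'a$ and substituting the decomposition, the numerator becomes
\begin{equation}
a'Aa = a'Q\Lambda Q'a = b'\Lambda b = \sum_{i=1}^n \lambda_i b_i^2,
\end{equation}
while orthogonality of $Q$ gives
\begin{equation}
a'a = a'QQ'a = b'b = \sum_{i=1}^n b_i^2 > 0.
\end{equation}
The Rayleigh quotient $a'Aa / a'a$ is therefore exhibited as a convex combination of the eigenvalues, with the nonnegative weights $b_i^2 \big/ \sum_j b_j^2$.

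Finally I would read off both inequalities by a term-by-term comparison. Because $\zeta_{min} \le \lambda_i \le \zeta_{max}$ for every $i$ and each weight $b_i^2 \ge 0$, we obtain $\zeta_{min}\sum_i b_i^2 \le \sum_i \lambda_i b_i^2 \le \zeta_{max}\sum_i b_i^2$, and dividing through by $a'a = \sum_i b_i^2$ delivers the stated bounds.

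There is essentially no genuine obstacle: this is the standard Rayleigh quotient bound, and the only point that requires any care is that the reduction to a weighted average of real eigenvalues rests on the spectral decomposition, which in turn rests on $A$ being symmetric. To confirm that the eigenvalues are the \emph{sharp} bounds, I would observe that the extremes are attained by choosing $a$ to be a unit eigenvector associated with $\zeta_{min}$ (resp.\ $\zeta_{max}$), for which the quotient equals exactly that eigenvalue.
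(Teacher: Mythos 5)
Your proof is correct and complete. Note that the paper does not actually supply an argument for this lemma: its ``proof'' is a citation to Schott's \emph{Matrix Analysis for Statistics}, so your spectral-decomposition argument is precisely the standard Rayleigh-quotient proof that the cited reference contains --- diagonalize $A=Q\Lambda Q'$, pass to $b=Q'a$, and read the quotient as a convex combination of eigenvalues. Two minor points in your favor: you correctly flag the implicit hypothesis $a\neq 0$ (which the paper's statement omits), and your closing observation that the bounds are attained at the extreme eigenvectors establishes sharpness, which goes slightly beyond what the lemma asks.
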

\begin{proof}
See \cite{schott2016matrix}.
\end{proof}

\begin{lemma}
\label{lm::under_non-centra}
Suppose $M_A$ is in the model set $\mathcal{M}_{c,k}:=\lrc{M_A: \T-\A=c, A/T=k}$, then it can be shown that  
\begin{equation}
\begin{array}{rcl}
\lambda_{A\cup T-A}&\ge&(c+k)\beta_{min}^2\tau_T\zeta_{min}n
\end{array}
\end{equation}
 \end{lemma}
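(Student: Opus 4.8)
The plan is to first pin down what $\lambda_{A\cup T-A}$ actually is: it is the non-centrality parameter of the $\chi^2$ statistic that arises when the underfitted model $M_A$ is compared against the enlarged model $M_{A\cup T}$. Using $\pmb{E}\lrb{y'\lrp{H_{A\cup T}-H_A}y}=\sigma^2\,\text{tr}\lrp{H_{A\cup T}-H_A}+\lrl{X_T\beta_T,\lrp{H_{A\cup T}-H_A}X_T\beta_T}$, the non-centrality is the quadratic term, and since $H_{A\cup T}X_T\beta_T=X_T\beta_T$ the Pythagorean identity collapses it to $||\lrp{I-H_A}X_T\beta_T||^2$. Scaling by the error precision $\tau_T=1/\sigma^2$ then records the closed form $\lambda_{A\cup T-A}=\tau_T||\lrp{I-H_A}X_T\beta_T||^2$. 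So the first step is purely bookkeeping to establish this expression.

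Second, I would decompose $X_T\beta_T=X_{A\cap T}\beta_{A\cap T}+X_{T/A}\beta_{T/A}$ and note that $\lrp{I-H_A}X_{A\cap T}=0$, because the columns indexed by $A\cap T$ lie in the span of $X_A$. Thus the in-model part of the signal is annihilated and only the missing predictors survive the projection, giving $\lambda_{A\cup T-A}=\tau_T\,\beta_{T/A}'\lrb{X_{T/A}'\lrp{I-H_A}X_{T/A}}\beta_{T/A}$. A quick cardinality count fixes the number of missing predictors: from $\T-\A=c$ and $|A/T|=k$ one gets $|A\cap T|=\A-k$, hence $|T/A|=\T-|A\cap T|=c+k$.

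Third, I would invoke the eigenvalue control. Applying \Cref{lm::sub_eigen} to the pair $A\subset A'=A\cup T$ (so that $A'/A=T/A$) gives $\nu_{\text{min}}\lrp{\frac{1}{n}X_{T/A}'\lrp{I-H_A}X_{T/A}}\ge\zeta_{min}$, and the Rayleigh-quotient lemma then yields $\beta_{T/A}'\lrb{X_{T/A}'\lrp{I-H_A}X_{T/A}}\beta_{T/A}\ge n\zeta_{min}||\beta_{T/A}||^2$. Finally, every coordinate of $\beta_{T/A}$ is a genuine true coefficient, so $|\beta_i|\ge\beta_{min}$ for each $i\in T/A$ and therefore $||\beta_{T/A}||^2\ge(c+k)\beta_{min}^2$. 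Stringing the three bounds together delivers $\lambda_{A\cup T-A}\ge(c+k)\beta_{min}^2\tau_T\zeta_{min}n$.

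The arithmetic here is short, so the only genuine obstacle is conceptual rather than computational: one must correctly identify $\lambda_{A\cup T-A}$ as the orthogonal-complement signal $\tau_T||\lrp{I-H_A}X_T\beta_T||^2$ and verify that \Cref{lm::sub_eigen} applies with the enlargement $A'=A\cup T$ rather than with the full model $F$. Once those two points are settled, the eigenvalue bound and the minimum-signal assumption close the argument immediately.
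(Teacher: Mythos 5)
Your proposal is correct and follows essentially the same route as the paper's own proof: reduce $\lambda_{A\cup T-A}$ to $\tau_T\,\beta_{T/A}'X_{T/A}'\lrp{I-H_A}X_{T/A}\beta_{T/A}$ via the projection identity, bound below using the eigenvalue control of \Cref{lm::sub_eigen} with $A'=A\cup T$, and finish with $||\beta_{T/A}||_2^2\ge(c+k)\beta_{min}^2$. Your version is in fact slightly more careful than the paper's, since you make explicit the identification of the non-centrality parameter and the cardinality count $|T/A|=c+k$, both of which the paper leaves implicit.
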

\begin{proof}
Notice that $T/A=c+k$.
 \begin{equation}
\begin{array}{rcl}
\lambda_{A\cup T-A}&=&\tau_T\lrp{\beta_T'X_T'(H_{A\cup T}-H_{A})X_T\beta_T}\\
&=&\tau_T\lrp{\beta_T'X_T'(I-H_A)X_T\beta_T}\\
&=&\tau_T\lrp{\beta_{T/A}'X_{T/A}'(I-H_A)X_{T/A}\beta_{T/A}}\\
&\ge&\tau_T\zeta_{min}n||\beta_{T/A}||_2^2\\
&\ge&(c+k)\beta_{min}^2\tau_T\zeta_{min}n
\end{array}
\end{equation}
\end{proof}
\begin{remark}
there is a question. 
\end{remark}
 \subsection{Proof of \Cref{tm::main}}
 \subsubsection{$\A<\T$}
 
We begin by proving the consistency under the situation $\mathcal{M}_{c,k,s}:=\lrc{M_A: \T-\A=c, A/T=k}$. First, notice that there are ${\T\choose c+k}{\F-\T\choose k}$ models associated with $\mathcal{M}_{c,k}$ for specific $c$ and $k$. The Bayes factor for any $M_A\in \mathcal{M}_{c,k}$ is 
 \begin{equation}
\begin{array}{rcl}
BF_{A:T}&=&\lrp{\frac{1-R_T^2}{1-R_A^2}}^{\ff{n-\T}}\lrp{1-R_A^2}^{-\ff{c}}\frac{\Gamma(\ff{\A})\Gamma(\ff{n-\A})}{\Gamma(\ff{\T})\Gamma(\ff{n-\T})}
\end{array}
\end{equation}
The terms associated with the coefficients of determination can be shown as
\begin{equation}
\begin{array}{rcl}
\lrp{\frac{1-R_T^2}{1-R_A^2}}&\to&\frac{n-\T}{n-\T+c+\lambda_{H_{A\cup T}-H_{T}}}\\
&\le&\frac{n-\T}{n-\T+\lambda_{H_{A\cup T}-H_{T}}}\\
&\le&1-\frac{\lambda_{H_{A\cup T}-H_{T}}}{n-\T+\lambda_{H_{A\cup T}-H_{T}}}\\
&\le&1-\text{min}\lrc{\frac{1}{2},\frac{\lambda_{H_{A\cup T}-H_{T}}}{2(n-\T)}}\\
&{\le}&1-\frac{\lambda_{H_{A\cup T}-H_{T}}}{2(n-\T)} \text{\qquad; with sufficiently large $n$}\\
&\stackbin[]{(i)}{\le}&1-\frac{(c+k)\beta_{min}^2\tau_T\zeta_{min}n}{2(n-\T)}
\end{array}
\end{equation}
where $(i)$ is due to \Cref{lm::under_non-centra}. It can also be shown that
\begin{equation}
\begin{array}{rcl}
\lrp{1-R_A^2}^{-\ff{c}}&\to&\lrp{\frac{n+nC_1}{n-\A+\lambda_{I-H_A}}}^{\ff{c}}\\
&\le&\lrp{\frac{n+nC_1}{n-\A+(c+k)\beta_{min}^2\tau_T\zeta_{min}n}}^{\ff{c}}\\
\end{array}
\end{equation}
Let $\lim\frac{n+nC_1}{n-\A+(c+k)\beta_{min}^2\tau_T\zeta_{min}n}=C_4=O(1)$. Then, 
\begin{equation}
\begin{array}{rcl}
\frac{\lrp{1-R_T^2}^{\ff{n-\T}}}{\lrp{1-R_A^2}^{\ff{n-\A}}}&=&\lrp{\frac{1-R_T^2}{1-R_A^2}}^{\ff{n-\T}}\lrp{1-R_A^2}^{-\ff{c}}\\
&\le&\exp{-\frac{(c+k)\beta_{min}^2\tau_T\zeta_{min}n}{4}+\ff{c}\log{C_4}}
\end{array}
\end{equation}
The summand then can be represented as
\begin{equation}
\begin{array}{rcl}
\stackbin[M_A\in\mathcal{M}_{c,k,s}]{}{\sum} BF_{A:T}PO_{A:T}&\le&{\T\choose c+k}{\F-\T\choose k}\frac{(\F-\A)!}{(\F-\T)!}\frac{\Gamma(\ff{\A})\Gamma(\ff{n-\A})}{\Gamma(\ff{\T})\Gamma(\ff{n-\T})}\exp{-\frac{(c+k)\beta_{min}^2\tau_T\zeta_{min}n}{4}+\ff{c}\log{C_4}} \\
&\le&\exp{-\frac{(c+k)\beta_{min}^2\tau_T\zeta_{min}n}{4}+\frac{c}{2}+\frac{5}{2}c\log{n}+2k\log{n}+\ff{c}\log{C_4}}\\
&\asymp&\exp{-\frac{(c+k)\beta_{min}^2\tau_T\zeta_{min}n}{4}+\frac{5}{2}c\log{n}+2k\log{n}}.
\end{array}
\end{equation} 
 If $\T=\frac{n}{\log{n}}$, then the consistency requires that $C_2>\frac{10}{\zeta_{min}}$. On the other hand, suppose that $\T=n^{d}$ for $0\le d<1$. Then $\beta_{min}^2\tau_T\zeta_{min}n=O(n^{1-d})$, which is growing faster than $\log{n}$. This implies that the $\zeta_{min}$ can converge to zero with a mild speed under such circumstance. 
 \subsubsection{$\A>\T$ and $A\cap T\neq T$}
 Under this circumstance, the dimensional penalty helps the procedure to pick up the true model comparing to the previous situation. Define the model set as \\ $\mathcal{M}_{c,k,l}:=\lrc{M_A: \A-\T=c, T/A=k}$, then the total number of models given any $c$ and $k$ is ${\T\choose k}*{\F-\T\choose k+c}$. The terms in the summand excluding the terms associated with the coefficient of determination can be shown as
\begin{equation}
\label{eq::DP_L}
\begin{array}{rcl}
{\T\choose k}{\F-\T\choose k+c}\frac{(\F-\A)!}{(\F-\T)!}\frac{\Gamma(\ff{\A})\Gamma(\ff{n-\A})}{\Gamma(\ff{\T})\Gamma(\ff{n-\T})}&\preccurlyeq&\exp{2k\log{n}+2k-c}
\end{array}
\end{equation}
It is straight forward to show that 
\begin{equation}
\label{eq::CoD_L}
\begin{array}{rcl}
\frac{\lrp{1-R_T^2}^{\ff{n-\T}}}{\lrp{1-R_A^2}^{\ff{n-\A}}}&=&\lrp{\frac{1-R_T^2}{1-R_A^2}}^{\ff{n-\T}}\lrp{1-R_A^2}^{\ff{c}}\\
&\to& \lrp{\frac{n-\T}{n-\T-c+\lambda_{H_{A\cup T}-H_A}}}^{\ff{n-\T}}\lrp{\frac{n-\A+\lambda_{I-H_A}}{n+C_1n}}^{\ff{c}}\\
&\le& \lrp{1-\frac{\lambda_{H_{A\cup T}-H_A}-c}{n-\T-c+\lambda_{H_{A\cup T}-H_A}}}^{\ff{n-\T}}\lrp{\frac{n-\A+n\zeta_{min}||\beta_{T/A}||_2^2\tau_T}{n+C_1n}}^{\ff{c}}\\
&\le&\exp{-\frac{k\zeta_{min}\log{n}C_2-c}{4}+\ff{c}\log{C_5}},
\end{array}
\end{equation}
 where $\lim\frac{n-\A+n\zeta_{min}||\beta_{T/A}||_2^2\tau_T}{n+C_1n}=C_5\le1$. Combining \Cref{eq::DP_L} and \Cref{eq::CoD_L}, it gives us
 \begin{equation}
\begin{array}{rcl}
\stackbin[M_A\in\mathcal{M}_{c,k,l}]{}{\sum} BF_{A:T}PO_{A:T}&\le&\exp{2k\log{n}+2k-c}\exp{-\frac{k\zeta_{min}\log{n}C_2-c}{4}+\ff{c}\log{C_5}}
\end{array}
\end{equation} 
 Given the condition $C_2>\frac{10}{\zeta_min}$ in the \Cref{tm::under-fitted}, the consistency holds for this situation.  
 \subsection{Overfitted Model}
Suppose that $\T=\frac{tn}{\log{n}}$, for $0<t<1$. 
  Define the overfitted model set\\ $\mathcal{M}_{c}:=\lrc{M_A:A\supset T, \A-\T=c}$ associated with a given finite $c$. Notice that the summation over the model set $\mathcal{M}_c$ is
    \begin{equation}
\begin{array}{rcl}
\stackbin[M_A\in\mathcal{M}_{c}]{}{\sum} BF_{A:T}PO_{A:T}&=&\frac{\overline{BF}}{c!}\lambda^{\ff{c}}. 
\end{array}
\end{equation} 
The individual Bayes factor is asymptotically equivalent to 
    \begin{equation}
\begin{array}{rcl}
BF_{A:T}\asymp\exp{\frac{\xi_A}{2}-\ff{c}\log{\log{n}}-2*\log{t}},
\end{array}
\end{equation} 
where $\xi_A=\tau_TY'(H_A-H_T)Y$ and $t=1$ for now. To show $\overline{BF}\to0$, it suffices to show that 
    \begin{equation}
    \label{eq::max_BF}
\begin{array}{rcl}
\exp{\lrb{\frac{\stackbin[M_A\in\mathcal{M}_c]{}{\text{max}}\lrp{\xi_A}}{2}}-\ff{c}\log{\log{n}}}\to0. 
\end{array}
\end{equation} 
This can be achieved by showing the probability of the  event defined as $\Cref{eq::max_BF}$ going to 1, such as 
    \begin{equation}
\begin{array}{rcl}
\Prob{\exp{\lrb{\frac{\stackbin[M_A\in\mathcal{M}_c]{}{\text{max}}\lrp{\xi_A}}{2}}-\ff{c}\log{\log{n}}}\ge\epsilon_n }&=&\Prob{{\lrb{\frac{\stackbin[M_A\in\mathcal{M}_c]{}{\text{max}}\lrp{\xi_A}}{2}}-\ff{c}\log{\log{n}}}\ge\log{\epsilon_n} }\\
&=&\Prob{{\lrb{{\stackbin[M_A\in\mathcal{M}_c]{}{\text{max}}\lrp{\xi_A}}}\ge\ff{c}*\log{\log{n}}} }\\
&\le&\exp{-w'},
\end{array}
\end{equation} 
where $\epsilon_n=\lrb{\log{n}}^{-\frac{c}{4}}\stackbin[n\to\infty]{}{\to}0$, and $w'\stackbin[n\to\infty]{}{\to}\infty$ to guarantee the convergence. 
Notice that 
    \begin{equation}
\begin{array}{rcl}
\stackbin[M_A\in\mathcal{M}_c]{}{\text{max}}\xi_A&=&\stackbin[M_A\in\mathcal{M}_c]{}{\text{max}}\tau_TY'(H_A-H_T)Y\\
&=&\stackbin[M_A\in\mathcal{M}_c]{}{\text{max}}\tau_T\pmb{\epsilon}'(H_A-H_T)\pmb{\epsilon}\\
&=&\stackbin[M_A\in\mathcal{M}_c]{}{\text{max}}\tau_T\sum_{i=1}^{i=c}\pmb{\epsilon}'(H_i-H_{i-1})\pmb{\epsilon}\\
&=&\stackbin[M_A\in\mathcal{M}_c]{}{\text{max}}\tau_T\sum_{i=1}^{i=c}\pmb{\epsilon}'\frac{(I-H_{i-1})x_ix_i'(I-H_{i-1})}{x_i'(I-H_{i-1})x_i}\pmb{\epsilon}\\
&\le&\stackbin[M_A\in\mathcal{M}_c]{}{\text{max}}\tau_T\sum_{i=1}^{i=c}\pmb{\epsilon}'\frac{(I-H_{i-1})x_ix_i'(I-H_{i-1})}{n\zeta_{min}}\pmb{\epsilon}\\
&\le&c*\tau_T\stackbin[k\notin B]{}{\text{max}}\stackbin[B\subset F;B\supset T]{}{\text{max}}\pmb{\epsilon}'\frac{(I-H_{B})x_kx_k'(I-H_{B})}{n\zeta_{min}}\pmb{\epsilon}\\
&=&\frac{c*\tau_T}{\zeta_{min}}\stackbin[k\notin B]{}{\text{max}}\stackbin[B\subset F;B\supset T]{}{\text{max}}\lrp{\frac{<(I-H_{B})x_k,\pmb{\epsilon}>}{\sqrt{n}}}^2,
\end{array}
\end{equation} 
which means that it suffices to show
\begin{equation}
\begin{array}{rcl}
\Prob{{\lrb{{\stackbin[M_A\in\mathcal{M}_c]{}{\text{max}}\lrp{\xi_A}}}\ge\ff{c}*\log{\log{n}}} }&\le&\Prob{\frac{c*\tau_T}{\zeta_{min}}\stackbin[k\notin B]{}{\text{max}}\stackbin[B\subset F;B\supset T]{}{\text{max}}\lrp{\frac{<(I-H_{B})x_k,\pmb{\epsilon}>}{\sqrt{n}}}^2\ge\ff{c}*\log{\log{n}}}\\
&=&\Prob{\stackbin[k\notin B]{}{\text{max}}\stackbin[B\subset F;B\supset T]{}{\text{max}}\lrp{\frac{|<(I-H_{B})x_k,\frac{\pmb{\epsilon}}{\sigma}>|}{\sqrt{n}}}\ge\sqrt{\ff{\zeta_{min}}*\log{\log{n}}}}\\
&\le&\exp{-w'}.
\end{array}
\end{equation}
Define a function 
\begin{equation}
\begin{array}{rcl}
V(Z)&:=&\stackbin[k\notin B]{}{\text{max}}\stackbin[B\subset F;B\supset T]{}{\text{max}}\lrp{\frac{|<(I-H_{B})x_k,Z>|}{\sqrt{n}}}.
\end{array}
\end{equation}
For any $Z$, $Z_1\in \mathcal{R}^n\sim N(0,I_{n\times n})$, it can be shown
\begin{equation}
\label{eq::Lipschitz}
\begin{array}{rcl}
|V(Z)-V(Z_1)|&=&|\stackbin[k\notin B]{}{\text{max}}\stackbin[B\subset F;B\supset T]{}{\text{max}}\lrp{\frac{|<(I-H_{B})x_k,Z>|}{\sqrt{n}}}-\stackbin[k\notin B]{}{\text{max}}\stackbin[B\subset F;B\supset T]{}{\text{max}}\lrp{\frac{|<(I-H_{B})x_k,Z_1>|}{\sqrt{n}}}|\\
&\le&\stackbin[k\notin B]{}{\text{max}}\stackbin[B\subset F;B\supset T]{}{\text{max}}|<\frac{(I-H_{B})x_k}{\sqrt{n}},Z-Z_1>|\\
&\le&\stackbin[k\notin B]{}{\text{max}}\stackbin[B\subset F;B\supset T]{}{\text{max}}\frac{||(I-H_{B})x_k||_2}{\sqrt{n}}||Z-Z_1||_2\\
&\stackbin[]{(i)}{\le}&||Z-Z_1||_2,
\end{array}
\end{equation}
where $(i)$ is due to the normalization assumption. \Cref{eq::Lipschitz} shows that $V(Z)$ is a Lipschitz function, and $||V(\bullet)||_{Lip}=1$. By the Theorem 5.2.2 in \cite{vershynin2016high}, and also can be found in \cite{ledoux2001concentration}, we have
\begin{equation}
\begin{array}{rcl}
\Prob{V(Z)\ge E\lrb{V(Z)}+t}\le\exp{-\ff{t^2}}.
\end{array}
\end{equation}
Now we bound the expectation of $V(Z)$ by introducing the following assumptions
\begin{assumption}
\begin{equation}
\begin{array}{rcl}
\pmb{E}\lrb{\stackbin[k\notin B]{}{\text{max}}\stackbin[B\subset F;B\supset T]{}{\text{max}}\lrp{\frac{|<(I-H_{B})x_k,Z>|}{\sqrt{n}}}}\le\ff{\sqrt{\ff{\zeta_{min}}*\log{\log{n}}}}.
\end{array}
\end{equation}
\end{assumption}
This assumption ensures that 
\begin{equation}
\begin{array}{rcl}
\Prob{V(Z)\ge2\ff{\sqrt{\ff{\zeta_{min}}*\log{\log{n}}} }}&\le&\Prob{V(Z)\ge\pmb{E}\lrb{V(Z)}+\ff{\sqrt{\ff{\zeta_{min}}*\log{\log{n}}} }}\\
&\le&\exp{-\frac{{\zeta_{min}}*\log{\log{n}}}{16}}\\
&=&\lrp{\log{n}}^{-\frac{\zeta_min}{16}}
\end{array}
\end{equation}
This completes the convergence as 
\begin{equation}
\begin{array}{rcl}
&&\Prob{\exp{\lrb{\frac{\stackbin[M_A\in\mathcal{M}_c]{}{\text{max}}\lrp{\xi_A}}{2}}-\ff{c}\log{\log{n}}}\ge\epsilon_n }\\
&\le&\Prob{\stackbin[k\notin B]{}{\text{max}}\stackbin[B\subset F;B\supset T]{}{\text{max}}\lrp{\frac{|<(I-H_{B})x_k,\frac{\pmb{\epsilon}}{\sigma}>|}{\sqrt{n}}}\ge\sqrt{\ff{\zeta_{min}}*\log{\log{n}}}}\\
&\le&\Prob{\stackbin[k\notin B]{}{\text{max}}\stackbin[B\subset F;B\supset T]{}{\text{max}}\lrp{\frac{|<(I-H_{B})x_k,\frac{\pmb{\epsilon}}{\sigma}>|}{\sqrt{n}}}
\ge\pmb{E}\lrb{\stackbin[k\notin B]{}{\text{max}}\stackbin[B\subset F;B\supset T]{}{\text{max}}\lrp{\frac{|<(I-H_{B})x_k,\frac{\pmb{\epsilon}}{\sigma}>|}{\sqrt{n}}}}+\ff{\sqrt{\ff{\zeta_{min}}*\log{\log{n}}} }}\\
&\le&\lrp{\log{n}}^{-\frac{\zeta_min}{16}},
\end{array}
\end{equation}
where $\epsilon_n=\lrb{\log{n}}^{-\frac{c}{4}}$. 
   \bibliographystyle{acm}

\end{document}